\title{On the derivatives of the integer-valued polynomials}
\author{\sc Bakir FARHI \\
Laboratoire de Mathématiques appliquées \\
Faculté des Sciences Exactes \\
Université de Bejaia, 06000 Bejaia, Algeria \\[1mm]
\href{mailto:bakir.farhi@gmail.com}{bakir.farhi@gmail.com} \\[1mm]
\url{http://farhi.bakir.free.fr/}
}
\date{}
\def\Q{{\mathbb Q}}
\def\C{{\mathbb C}}
\def\N{{\mathbb N}}
\def\Z{{\mathbb Z}}
\def\I{\mathscr{I}}
\def\lcm{\mathrm{lcm}}
\def\id{\mathrm{I}}
\def\den{\mathrm{den}}    
\def\EMdash{\leavevmode\hbox to 10.6mm{\vrule height .63ex depth -.59ex
    width 10mm\hfill}}
\theoremstyle{plain}
\numberwithin{equation}{section}
\newtheorem{thm}{Theorem}[section]
\newtheorem{lemma}[thm]{Lemma}
\newtheorem{rmq}[thm]{Remark}
\newtheorem{prop}[thm]{Proposition}
\newtheorem{coll}[thm]{Corollary}
\begin{document}
\maketitle
\begin{abstract}
In this paper, we study the derivatives of an integer-valued polynomial of a given degree. Denoting by $E_n$ the set of the integer-valued polynomials with degree $\leq n$, we show that the smallest positive integer $c_n$ satisfying the property: $\forall P \in E_n, c_n P' \in E_n$ is $c_n = \lcm(1 , 2 , \dots , n)$. As an application, we deduce an easy proof of the well-known inequality $\lcm(1 , 2 , \dots , n) \geq 2^{n - 1}$ ($\forall n \geq 1$). In the second part of the paper, we generalize our result for the derivative of a given order $k$ and then we give two divisibility properties for the obtained numbers $c_{n , k}$ (generalizing the $c_n$'s). Leaning on this study, we conclude the paper by determining, for a given natural number $n$, the smallest positive integer $\lambda_n$ satisfying the property: $\forall P \in E_n$, $\forall k \in \N$: $\lambda_n P^{(k)} \in E_n$. In particular, we show that: $\lambda_n = \prod_{p \text{ prime}} p^{\lfloor\frac{n}{p}\rfloor}$ ($\forall n \in \N$).
\end{abstract}
\noindent\textbf{MSC 2010:} Primary 13F20, 11A05. \\
\textbf{Keywords:} Integer-valued polynomials, least common multiple, stability by derivation, sequences of integers.

\section{Introduction and Notation}
Throughout this paper, we let $\N^*$ denote the set of positive integers. We let $\lfloor\cdot\rfloor$ denote the integer-part function. For a given prime number $p$, we let $v_p$ denote the usual $p$-adic valuation. For a given positive integer $n$ and given positive integers $a_1 , \dots , a_n$, we denote the least common multiple of $a_1 , \dots , a_n$ by $\lcm(a_1 , \dots , a_n)$ or by one of the two equivalent notations $a_1 \vee \dots \vee a_n$ and $\bigvee_{i = 1}^{n} a_i$, which are sometimes more convenient. For given positive integers $n$ and $k$, with $1 \leq k \leq n$, and given real numbers $x_1 , \dots , x_n$, we let $x_1 \cdots \widehat{x_k} \cdots x_n$ denote the product $\displaystyle\prod_{\begin{subarray}{c}
1 \leq i \leq n \\
i \neq k
\end{subarray}} x_i$. We say that a rational number $u$ is a multiple of a non-zero rational number $v$ if the ratio $u / v$ is an integer. For a given rational number $r$, we let $\den(r)$ denote the denominator of $r$; that is the smallest positive integer $d$ such that $d r \in \Z$. For given $n , k \in \N$, with $n \geq k$, we define
$$
F_{n , k} ~:=~ \sum_{\begin{subarray}{c}
i_1 , \dots , i_k \in \N^* \\
i_1 + \dots + i_k = n
\end{subarray}} \frac{1}{i_1 i_2 \cdots i_k} ~~~~\text{and}~~~~ d_{n , k} ~:=~ \den(F_{n , k}) ,
$$
with the conventions that $F_{0 , 0} = 1$ and $F_{n , 0} = 0$ for any $n \in \N^*$. \\
For given $n , k \in \N$, with $n \geq k$, we also define
$$
q_{n , k} ~:=~ \lcm\big\{i_1 i_2 \cdots i_k ~|~ i_1 , \dots , i_k \in \N^* , i_1 + \dots + i_k \leq n\big\} ,
$$
with the convention that $q_{n , 0} = 1$ for any $n \in \N$. Besides, for $n \in \N$, we define
\begin{eqnarray*}
q_n & := & \lcm\big\{q_{n , k} ; 0 \leq k \leq n\big\} \\
& = & \lcm\big\{i_1 i_2 \cdots i_k ~|~ k \in \N^* , i_1 , \dots , i_k \in \N^* , i_1 + \dots + i_k \leq n\big\} .
\end{eqnarray*}

\noindent Next, we let $s(n , k)$ ($n , k \in \N$, $n \geq k$) denote the Stirling numbers of the first kind, which are the integer coefficients appearing in the polynomial identity:
$$
x (x - 1) \cdots (x - n + 1) ~=~ \sum_{k = 0}^{n} s(n , k) x^k
$$
(see e.g., \cite[Chapter V]{com} or \cite[Chapter 6]{gra}).

Further, we let $\id$, $D$ and $\Delta$ the linear operators on $\C[X]$ which respectively represent the identity, the derivation and the forward difference \big($\Delta P(X) = P(X + 1) - P(X) , \forall P \in \C[X]$\big). The expression of $D$ in terms of $\Delta$, obtained by using symbolic methods (see e.g., \linebreak\cite[Chapter 1, §6]{jor}), is given by:
\begin{equation}\label{eq3}
D ~=~ \ln(\id + \Delta) ~=~ \Delta - \frac{\Delta^2}{2} + \frac{\Delta^3}{3} - \dots
\end{equation}
Note that this formula will be of crucial importance throughout this paper.

An integer-valued polynomial is a polynomial $P \in \C[X]$ such that $P(\Z) \subset \Z$; that is the value taken by $P$ at every integer is an integer. It is immediate that every polynomial with integer coefficients is an integer-valued polynomial but the converse is false (for example, the polynomial $\frac{X (X + 1)}{2}$ is a counterexample to the converse statement). However, an integer-valued polynomial always has rational coefficients (i.e., lies in $\Q[X]$). This can be easily proved by using for example the Lagrange interpolation formula. So the set $E$ of the integer-valued polynomials is a subring of $\Q[X]$. More interestingly, the set $E$ can be also seen as a $\Z$-module. From this point of view, it is shown (see e.g., \cite{cah} or \cite{pol}) that $E$ is free with infinite rank and has as a basis the sequence of polynomials:
$$
B_n(X) ~:=~ \dfrac{X (X - 1) \cdots (X - n + 1)}{n!} ~=~ \binom{X}{n}
$$
($n \in \N$), with the convention that $B_0(X) = 1$. \\
From the definition of the polynomials $B_n$ ($n \in \N$), the following identities are immediate:
$$
\Delta^i B_j ~=~ \begin{cases}
B_{j - i} & \text{if } i \leq j \\
0 & \text{else}
\end{cases} ~~~~\text{and}~~~~ B_j(0) ~=~ \begin{cases}
1 & \text{if } j = 0 \\
0 & \text{else}
\end{cases} ~~~~~~~~~~ (\forall i , j \in \N) .
$$
Combining these, we derive that for all $i , j \in \N$, we have
\begin{equation}\label{eq9}
\left(\Delta^i B_j\right)(0) ~=~ \begin{cases}
1 & \text{if } i = j \\
0 & \text{else}
\end{cases} ~=~ \delta_{i j} 
\end{equation}
(where $\delta_{i j}$ denotes the Kronecker delta). The last formula will be useful later in §\ref{sec3}.

For a given $n \in \N$, let $E_n$ denote the set of the integer-valued polynomials with degree $\leq n$. Then, it is clear that $E_n$ is a free submodule of $E$ and has as a basis the polynomials $B_0 , B_1 , \dots , B_n$ (so $E_n$ is of rank $(n + 1)$). Obviously, $E$ is stable by the forward difference operator $\Delta$ (i.e., $\forall P \in E : \Delta P \in E$). The stability by $\Delta$ also holds for each $E_n$ ($n \in \N$). But remarkably, $E$ is not stable by the operator of derivation $D$ (for example $B_2'(X) = X - \frac{1}{2} \not\in E$). This last remark constitutes the starting point of our study. To recover in $E$ (actually in each $E_n$) something that is close to the stability by derivation, we argue like this: for a given $n \in \N$ and a given $P \in E_n$, we can write $P$ as: $P = a_0 B_0 + a_1 B_1 + \dots + a_n B_n$ ($a_0 , \dots , a_n \in \Z$), so we have that $n! P \in \Z[X]$, which implies that $(n! P)' = n! P' \in \Z[X]$. Thus $n! P' \in E_n$. Consequently, the positive integer $n!$ satisfies the following important property:
$$
\forall P \in E_n :~ n! P' \in E_n . 
$$
This leads us to propose the following problem:

\begin{leftbar}
\noindent \textbf{Problem 1:} \emph{%
For a given natural number $n$, determine the smallest positive integer $c_n$ satisfying the property:
$$
\forall P \in E_n :~ c_n P' \in E_n .
$$
}
\end{leftbar}

\noindent In §\ref{sec2}, we show that actually $c_n$ is far enough from $n!$; precisely, we show that: \linebreak $c_n = \lcm(1 , 2 , \dots , n)$. Then, we use this result to derive an easy proof of the nontrivial inequality $\lcm(1 , 2 , \dots , n) \geq 2^{n - 1}$ ($\forall n \geq 1$). In §\ref{sec3}, we first solve the more general problem:

\begin{leftbar}
\noindent \textbf{Problem 2:} \emph{%
For given $n , k \in \N$, determine the smallest positive integer $c_{n , k}$ satisfying the property:
$$
\forall P \in E_n :~ c_{n , k} P^{(k)} \in E_n .
$$
}
\end{leftbar} 

\noindent From the definitions of the $c_n$'s and the $c_{n , k}$'s, it is obvious that $c_{n , 1} = c_n$ ($\forall n \in \N$) and that $c_{n , k} = 1$ if $n , k \in \N$ satisfy the condition $k > n$. The last property allows us to restrict our study of the numbers $c_{n , k}$ to the couples $(n , k) \in \N^2$ such that $n \geq k$. A fundamental result of §\ref{sec3} shows that for every $n , k \in \N$, with $n \geq k$, we have
$$
c_{n , k} ~=~ \lcm\{d_{m , k} ;~ k \leq m \leq n\} .
$$
From this, we deduce that $c_{n , k}$ divides $q_{n , k}$ for any $n , k \in \N$, with $n \geq k$. In the opposite direction, we show that $c_{n , k}$ is a multiple of the rational number $\frac{q_{n , k}}{k!}$ ($\forall n , k \in \N$, $n \geq k$). Then, as a second part of §\ref{sec3}, we solve the following problem:

\begin{leftbar}
\noindent \textbf{Problem 3:} \emph{%
For a given $n \in \N$, determine the smallest positive integer $\lambda_n$ satisfying the property:
$$
\forall P \in E_n , \forall k \in \N :~ \lambda_n P^{(k)} \in E_n .
$$
}
\end{leftbar} 

As a fundamental result, we show that:
$$
\lambda_n ~=~ q_n ~=~ \prod_{p \text{ prime}} p^{\left\lfloor\frac{n}{p}\right\rfloor} 
$$
(for any $n \in \N$).

In §\ref{sec4}, we give some other interesting formulas for the crucial numbers $F_{n , k}$ ($n , k \in \N$, $n \geq k$); in particular, we express the $F_{n , k}$'s in terms of the Stirling numbers of the first kind. We finally conclude the paper by presenting (in tables) the first values of the numbers $c_{n , k}$, $q_{n , k}$ and $\lambda_n$.

\section{Results concerning the first derivative of an integer-valued polynomial}\label{sec2}
In this section, we are going to solve the first problem posed in the introduction. To do so, we need some preparations. For a given $n \in \N$, let:
$$
\I_n ~:=~ \left\{a \in \Z :~ \forall P \in E_n , a P' \in E_n\right\} .
$$
Then, it is easy to check that $\I_n$ is an ideal of $\Z$; besides, $\I_n$ is non-zero because $n! \in \I_n$ (as explained in the introduction). Since $\Z$ is a principal ring, one deduces that $\I_n$ has the form $\I_n = \alpha_n \Z$ ($\alpha_n \in \N^*$), and $\alpha_n$ is simply the smallest positive integer satisfying the property: $\forall P \in E_n , \alpha_n P' \in E_n$. So $\alpha_n$ is nothing else the constant $c_n$ required in Problem 1. Consequently, we have
\begin{equation}\label{eq1}
\I_n ~=~ c_n \Z .
\end{equation}

The following theorem solves Problem 1.
\begin{thm}\label{t1}
For every positive integer $n$, we have
$$
c_n ~=~ \lcm(1 , 2 , \dots , n) .
$$
\end{thm}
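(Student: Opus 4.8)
The plan is to exploit the fact, recalled in the introduction, that $E_n$ is a free $\Z$-module with basis $B_0, B_1, \dots, B_n$. Since the derivation operator $D$ is linear, the defining property of $c_n$ — namely $c_n P' \in E_n$ for every $P \in E_n$ — is equivalent to the finitely many conditions $c_n B_j' \in E_n$ for each $j \in \{0, 1, \dots, n\}$. Indeed, writing an arbitrary $P \in E_n$ as $P = \sum_{j=0}^{n} a_j B_j$ with $a_j \in \Z$, we have $c_n P' = \sum_{j=0}^{n} a_j (c_n B_j')$, so the sufficiency follows from $E_n$ being a $\Z$-module, and the necessity follows by taking $P = B_j$ individually. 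Thus the first step reduces Problem~1 to understanding the derivatives of the basis polynomials and expanding them back in the same basis.

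The decisive step is to compute $B_j'$ explicitly via the symbolic formula \eqref{eq3}, namely $D = \ln(\id + \Delta) = \sum_{i \geq 1} \frac{(-1)^{i-1}}{i}\Delta^i$. Applying this to $B_j$ and using the identities $\Delta^i B_j = B_{j-i}$ for $i \leq j$ and $\Delta^i B_j = 0$ for $i > j$, the series terminates and yields
$$
B_j' ~=~ \sum_{i=1}^{j} \frac{(-1)^{i-1}}{i}\, B_{j-i} .
$$
This writes $B_j'$ in the basis $B_0, \dots, B_{j-1}$ with the coefficient of $B_{j-i}$ equal to $\frac{(-1)^{i-1}}{i}$; as $i$ runs from $1$ to $j$, the denominators run exactly through $1, 2, \dots, j$.

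Because $E_n$ is free on $B_0, \dots, B_n$, a rational combination of the $B_\ell$ belongs to $E_n$ if and only if all its coefficients are integers. Hence $c_n B_j' \in E_n$ holds precisely when $i \mid c_n$ for every $i \in \{1, \dots, j\}$. Taking the conjunction over all $j \leq n$, the binding case is $j = n$, and the requirement becomes that $c_n$ be a common multiple of $1, 2, \dots, n$; this gives $\lcm(1, 2, \dots, n) \mid c_n$. Conversely, the value $c_n = \lcm(1, 2, \dots, n)$ satisfies every divisibility $i \mid c_n$ with $1 \leq i \leq n$, so it makes each $c_n B_j'$ an integer combination of basis elements, i.e.\ $c_n B_j' \in E_n$. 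Combining the two directions yields $c_n = \lcm(1, 2, \dots, n)$.

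The only genuinely delicate point is the second step: justifying that the logarithmic operator series may be applied termwise to $B_j$, that it terminates, and that the resulting coefficients are exactly $\frac{(-1)^{i-1}}{i}$. Everything else is formal — both the reduction to basis elements and the reading off of the divisibility conditions are immediate consequences of the freeness of $E_n$ as a $\Z$-module, so I expect no further obstacle once the expansion of $B_j'$ is in hand.
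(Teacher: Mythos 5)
Your proof is correct and is essentially the paper's own argument: both rest on the operator identity \eqref{eq3} and on testing against the basis polynomials $B_j$, which produce exactly the denominators $1, 2, \dots, n$. The only difference is organizational --- the paper proves the two divisibilities separately, applying \eqref{eq3} to an arbitrary $P \in E_n$ (using $\Delta$-stability of $E_n$) for one direction and evaluating $B_k'(0) = \frac{(-1)^{k-1}}{k}$ for the other, whereas you read both directions off the single basis expansion $B_j' = \sum_{i=1}^{j} \frac{(-1)^{i-1}}{i}\, B_{j-i}$ combined with the freeness of $E_n$.
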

\begin{proof}
Let $n \in \N^*$ be fixed. For simplicity, we pose $\ell_n := \lcm(1 , 2 , \dots , n)$. To show that $c_n = \ell_n$, we will show that $\ell_n$ is a multiple of $c_n$ and then that $c_n$ is a multiple of $\ell_n$. \\[1mm]
\textbullet{} Let us show that $\ell_n$ is a multiple of $c_n$; that is $\ell_n \in \I_n$ (in view of \eqref{eq1}). So, according to the definition of $\I_n$, this is equivalent to show the property:
\begin{equation}\label{eq2}
\forall P \in E_n :~ \ell_n P' \in E_n .
\end{equation}
Let us show \eqref{eq2}. So, let $P \in E_n$ and show that $\ell_n P' \in E_n$. By applying the identity of linear operators \eqref{eq3} to $P$, we get
$$
P'~=~ \Delta P - \frac{\Delta^2 P}{2} + \frac{\Delta^3 P}{3} - \dots ~=~ \sum_{k = 1}^{n} \frac{(-1)^{k - 1}}{k} \Delta^k P
$$
(because $\Delta^k P = 0$ for $k > n$). Hence
$$
\ell_n P' ~=~ \sum_{k = 1}^{n} (-1)^{k - 1} \frac{\ell_n}{k} (\Delta^k P) .
$$
Because $\Delta^k P \in E_n$ for any $k \in \N$ (since $E_n$ is stable by $\Delta$) and $\frac{\ell_n}{k} \in \Z$ for any $k \in \{1 , 2 , \dots , n\}$ (by definition of $\ell_n$), the last identity shows that $\ell_n P' \in E_n$, as required. \\[1mm]
\textbullet{} Now, let us show that $c_n$ is a multiple of $\ell_n$. By definition of $\ell_n$, this is equivalent to show that $c_n$ is a multiple of each of the positive integers $1 , 2 , \dots , n$. So, let $k \in \{1 , 2 , \dots , n\}$ be fixed and show that $c_n$ is a multiple of $k$. Since $k \leq n$, we have $B_k \in E_n$; thus (by definition of $c_n$): $c_n B_k' \in E_n$. This implies (in particular) that $c_n B_k'(0) \in \Z$. But since
\begin{multline}
B_k'(0) ~=~ \lim_{x \rightarrow 0} \frac{B_k(x)}{x} ~=~ \lim_{x \rightarrow 0} \frac{(x - 1) (x - 2) \cdots (x - k + 1)}{k!} ~=~ \frac{(-1)(-2) \cdots (- k + 1)}{k!} \\
=~ (-1)^{k - 1} \frac{(k - 1)!}{k!} ~=~ \frac{(-1)^{k - 1}}{k} , \label{eq10}
\end{multline}
it follows that $(-1)^{k - 1} \frac{c_n}{k} \in \Z$, implying that $c_n$ is a multiple of $k$, as required. \\[1mm]
This completes the proof of the theorem.
\end{proof}

As an application of Theorem \ref{t1}, we derive a well-known nontrivial lower bound for $\lcm(1 , 2 , \dots , n)$ ($n \in \N^*$). We have the following:
\begin{coll}\label{coll1}
For every positive integer $n$, we have
$$
\lcm(1 , 2 , \dots , n) ~\geq~ 2^{n - 1} .
$$
\end{coll}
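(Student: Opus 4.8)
The plan is to lean on Theorem~\ref{t1}, which identifies $c_n = \lcm(1 , 2 , \dots , n)$, thereby reducing the stated inequality to proving the purely combinatorial bound $c_n \geq 2^{n - 1}$. Since $c_n$ is characterized as the smallest positive integer for which $c_n P' \in E_n$ holds for all $P \in E_n$, the strategy is to exhibit a single well-chosen $P \in E_n$ whose derivative, evaluated at suitable integer arguments, has denominators large enough to force $c_n$ to be large.

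The natural candidate is $P = B_n = \binom{X}{n} \in E_n$. Since $c_n B_n' \in E_n$ and every integer-valued polynomial takes integer values at the integers, we have $c_n B_n'(m) \in \Z$ for every $m \in \Z$. First I would compute $B_n'(m)$ at the integer points $m \in \{0 , 1 , \dots , n - 1\}$. Differentiating $B_n = \frac{1}{n!}\prod_{i = 0}^{n - 1}(X - i)$ by the product rule and evaluating at such an $m$, only the summand omitting the factor $(X - m)$ survives, which gives
$$
B_n'(m) ~=~ \frac{(-1)^{n - 1 - m}\, m!\,(n - 1 - m)!}{n!} ~=~ \frac{(-1)^{n - 1 - m}}{n \binom{n - 1}{m}} .
$$
This is, up to sign, a unit fraction, so the condition $c_n B_n'(m) \in \Z$ forces $n\binom{n - 1}{m}$ to divide $c_n$; in particular $c_n \geq n\binom{n - 1}{m}$ for every $m \in \{0 , 1 , \dots , n - 1\}$.

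To finish, I would choose $m$ so as to maximize $\binom{n - 1}{m}$. Since the $n$ coefficients $\binom{n - 1}{0} , \dots , \binom{n - 1}{n - 1}$ are nonnegative and sum to $2^{n - 1}$, their maximum is at least the average $\frac{2^{n - 1}}{n}$; selecting the corresponding $m$ yields $c_n \geq n \cdot \frac{2^{n - 1}}{n} = 2^{n - 1}$, and combining this with $c_n = \lcm(1 , 2 , \dots , n)$ gives the claim. The only delicate point is the evaluation of $B_n'$ at the integers $0 , 1 , \dots , n - 1$ together with the recognition that its denominator is exactly $n\binom{n - 1}{m}$; once that computation is secured, the averaging argument is immediate, so I anticipate no serious obstacle beyond it.
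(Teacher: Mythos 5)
Your proposal is correct and follows essentially the same route as the paper: reduce via Theorem~\ref{t1} to showing $c_n \geq 2^{n-1}$, test the definition of $c_n$ against $P = B_n$, compute $B_n'(m) = \pm 1/\bigl(n\binom{n-1}{m}\bigr)$ at $m \in \{0,\dots,n-1\}$, and invoke $\sum_{m=0}^{n-1}\binom{n-1}{m} = 2^{n-1}$. The only cosmetic difference is in the last step: you take the maximum of the lower bounds $c_n \geq n\binom{n-1}{m}$ and compare it with the average, whereas the paper averages the reciprocals $1/|c_n B_n'(m)| \leq 1$ to get $2^{n-1}/c_n \leq 1$ --- the same inequality in disguise.
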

To deduce this corollary from Theorem \ref{t1}, we just need the special identity of the following lemma.

\begin{lemma}\label{l1}
For every positive integer $n$, we have
$$
\frac{1}{n} \sum_{k = 0}^{n - 1} \frac{1}{|B_n'(k)|} ~=~ 2^{n - 1} .
$$
\end{lemma}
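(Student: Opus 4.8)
The plan is to compute $B_n'(k)$ explicitly for each $k \in \{0, 1, \dots, n-1\}$, which converts the left-hand side into a binomial sum that collapses via the binomial theorem. Recall that $B_n(X) = \frac{1}{n!}\prod_{j=0}^{n-1}(X - j)$, so by the product rule $B_n'(X)$ is a sum of $n$ terms, each obtained by deleting exactly one of the linear factors $(X-j)$. The key observation is that when this derivative is evaluated at an integer $k$ with $0 \le k \le n-1$, every term except one still contains the factor $(X-k)$, which vanishes at $X = k$; only the term in which $(X-k)$ was the deleted factor survives. Hence
$$
B_n'(k) ~=~ \frac{1}{n!}\prod_{\substack{0 \le j \le n-1 \\ j \ne k}}(k - j) .
$$

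The next step is to evaluate this product by splitting it at $j = k$. The factors with $j < k$ are $k, k-1, \dots, 1$, contributing $k!$, while the factors with $j > k$ are $-1, -2, \dots, -(n-1-k)$, contributing $(-1)^{n-1-k}(n-1-k)!$. Taking absolute values discards the sign, leaving
$$
|B_n'(k)| ~=~ \frac{k!\,(n-1-k)!}{n!} .
$$
Inverting and writing $n! = n\,(n-1)!$ then produces the clean identity
$$
\frac{1}{|B_n'(k)|} ~=~ \frac{n!}{k!\,(n-1-k)!} ~=~ n\binom{n-1}{k} .
$$

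Finally, I would substitute this back into the statement and sum:
$$
\frac{1}{n}\sum_{k=0}^{n-1}\frac{1}{|B_n'(k)|} ~=~ \frac{1}{n}\sum_{k=0}^{n-1} n\binom{n-1}{k} ~=~ \sum_{k=0}^{n-1}\binom{n-1}{k} ~=~ 2^{n-1},
$$
the last equality being the binomial expansion of $(1+1)^{n-1}$. There is no genuine obstacle in this argument; the only point demanding a little care is the factorial and sign bookkeeping in the product $\prod_{j \ne k}(k-j)$, and even the sign is immaterial here since the statement involves only $|B_n'(k)|$. The conceptual heart of the proof is the single observation that differentiating a product of distinct linear factors and evaluating at one of its roots annihilates all but one summand.
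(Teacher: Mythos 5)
Your proof is correct and follows essentially the same route as the paper's: differentiate the product of linear factors, observe that evaluation at a root $k$ kills every term except the one with $(X-k)$ deleted, compute $|B_n'(k)| = \frac{k!\,(n-1-k)!}{n!}$, and finish with the binomial theorem. The factorial and sign bookkeeping matches the paper's computation exactly, so there is nothing to add.
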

\begin{proof}
Let $n \in \N^*$ be fixed. From the definition $B_n(X) := \frac{X (X - 1) \cdots (X - n + 1)}{n!}$, we derive that:
$$
B_n'(X) ~=~ \sum_{\ell = 0}^{n - 1} \frac{X \cdots \widehat{(X - \ell)} \cdots (X - n + 1)}{n!} .
$$
It follows that for any $k \in \{0 , 1 , \dots , n - 1\}$, we have
\begin{eqnarray*}
B_n'(k) & = & {\left[\frac{X \cdots \widehat{(X - k)} \cdots (X - n + 1)}{n!}\right]}_{X = k} \\[3mm]
& = & \frac{k (k - 1) \cdots 1 \times (-1) (-2) \cdots (k - n + 1)}{n!} \\[3mm]
& = & (-1)^{n - k - 1} \, \frac{k! (n - k - 1)!}{n!} ,
\end{eqnarray*}
which gives
$$
\frac{1}{|B_n'(k)|} ~=~ \frac{n!}{k! (n - k - 1)!} ~=~ n \binom{n - 1}{k} .
$$
Thus:
$$
\frac{1}{n} \sum_{k = 0}^{n - 1} \frac{1}{|B_n'(k)|} ~=~ \sum_{k = 0}^{n - 1} \binom{n - 1}{k} ~=~ 2^{n - 1}
$$
(according to the binomial formula). The lemma is proved.
\end{proof}

\begin{proof}[Proof of Corollary \ref{coll1}]
Let $n \in \N^*$ be fixed. Since $B_n \in E_n$, we have $c_n B_n' \in E_n$; that is $c_n B_n'(k) \in \Z$ for any $k \in \Z$. In particular, we have $c_n B_n'(k) \in \Z$ for any $k \in \{0 , 1 , \dots , n - 1\}$. But since $B_n'(k) \neq 0$ for $k \in \{0 , 1 , \dots , n - 1\}$ (see the proof of the preceding lemma), we have precisely $c_n B_n'(k) \in \Z^*$ ($\forall k \in \{0 , 1 , \dots , n - 1\}$), implying that $|c_n B_n'(k)| \geq 1$ ($\forall k \in \{0 , 1 , \dots , n - 1\}$). Using this fact, we get
$$
\frac{1}{n} \sum_{k = 0}^{n - 1} \frac{1}{|c_n B_n'(k)|} ~\leq~ \frac{1}{n} \sum_{k = 0}^{n - 1} 1 ~=~ 1 .
$$
But according to Lemma \ref{l1}, we have
$$
\frac{1}{n} \sum_{k = 0}^{n - 1} \frac{1}{|c_n B_n'(k)|} ~=~ \frac{2^{n - 1}}{c_n} .
$$
Thus $\frac{2^{n - 1}}{c_n} \leq 1$, which gives $c_n \geq 2^{n - 1}$; that is (according to Theorem \ref{t1}) $\lcm(1 , 2 , \dots , n) \geq 2^{n - 1}$, as required. The corollary is proved.
\end{proof}

\section{Results concerning the higher order derivatives of an integer-valued polynomial}\label{sec3}

In this section, we are going to solve the second problem posed in the introduction. To do so, we just adapt and generalize the method used in §\ref{sec2}. For given $n , k \in \N$, let
$$
\I_{n , k} ~:=~ \left\{a \in \Z :~ \forall P \in E_n , a P^{(k)} \in E_n\right\} .
$$
It is easy to check that $\I_{n , k}$ is an ideal of $\Z$. Besides, for any $P \in E_n$, we have $n! P \in \Z[X]$ (as explained in the introduction), which implies that $(n! P)^{(k)} = n! P^{(k)} \in \Z[X]$ and so $n! P^{(k)} \in E_n$. Hence $n! \in \I_{n , k}$, showing that the ideal $\I_{n , k}$ is non-zero. Since $\Z$ is a principal ring, one deduces that $\I_{n , k}$ has the form $\I_{n , k} = \alpha_{n , k} \Z$ ($\alpha_{n , k} \in \N^*$) and $\alpha_{n , k}$ is simply the smallest positive integer satisfying the property: $\forall P \in E_n$, $\alpha_{n , k} P^{(k)} \in E_n$. So $\alpha_{n , k}$ is nothing else the constant $c_{n , k}$ required in Problem 2. Thus, we have
\begin{equation}\label{eq4}
\I_{n , k} ~=~ c_{n , k} \Z .
\end{equation}
The following theorem solves Problem 2.
\begin{thm}\label{t2}
For every natural numbers $n$ and $k$, we have
$$
c_{n , k} ~=~ \lcm\left\{d_{m , k} ~;~ k \leq m \leq n\right\} .
$$
In particular, $c_{n , k}$ divides the positive integer $q_{n , k}$.
\end{thm}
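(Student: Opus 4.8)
The plan is to follow exactly the two-sided divisibility scheme of Theorem~\ref{t1}, working inside the free $\Z$-module $E_n$ with basis $B_0, \dots, B_n$. I write $L := \lcm\{d_{m,k} : k \le m \le n\}$; in view of \eqref{eq4} it suffices to prove that $L$ is a multiple of $c_{n,k}$ (equivalently, $L \in \I_{n,k}$) and, conversely, that $c_{n,k}$ is a multiple of $L$. The one genuinely new ingredient, replacing the first-order expansion $D = \sum_k \frac{(-1)^{k-1}}{k}\Delta^k$ used in §\ref{sec2}, is an explicit expansion of the higher power $D^k$ in powers of $\Delta$ whose coefficients turn out to be precisely the $F_{m,k}$; I expect deriving and justifying this expansion to be the main obstacle.

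To obtain it, I would raise the operator identity \eqref{eq3} to the $k$-th power. Formally,
$$
D^k ~=~ \Big(\sum_{i \ge 1} \frac{(-1)^{i-1}}{i}\,\Delta^i\Big)^{k} ~=~ \sum_{i_1, \dots, i_k \ge 1} \frac{(-1)^{i_1 + \dots + i_k - k}}{i_1 \cdots i_k}\, \Delta^{i_1 + \dots + i_k} ,
$$
and collecting the terms for which $i_1 + \dots + i_k = m$ recognises the inner sum as $F_{m,k}$, giving $D^k = \sum_{m \ge k} (-1)^{m-k} F_{m,k}\, \Delta^m$. These manipulations are legitimate on each $E_n$ because $\Delta$ is nilpotent there (indeed $\Delta^m P = 0$ on $E_n$ for $m > n$), so all series terminate and the composition of formal power series is realised by genuine finite sums of operators. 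Applying this to any $P \in E_n$ yields the clean formula $P^{(k)} = \sum_{m=k}^{n} (-1)^{m-k} F_{m,k}\, \Delta^m P$.

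For the first divisibility, I take an arbitrary $P \in E_n$. Since $d_{m,k} = \den(F_{m,k})$ divides $L$, each product $L\,F_{m,k}$ is an integer; as moreover $\Delta^m P \in E_n$ (stability of $E_n$ under $\Delta$), the formula for $L P^{(k)}$ exhibits it as a $\Z$-linear combination of elements of $E_n$, whence $L P^{(k)} \in E_n$. This proves $L \in \I_{n,k}$, and therefore $c_{n,k} \mid L$.

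For the reverse divisibility, I would test the definition of $c_{n,k}$ on the basis polynomials. Fixing $m$ with $k \le m \le n$, we have $B_m \in E_n$, hence $c_{n,k} B_m^{(k)} \in E_n$; expanding via $\Delta^j B_m = B_{m-j}$ gives
$$
B_m^{(k)} ~=~ \sum_{j=k}^{m} (-1)^{j-k} F_{j,k}\, B_{m-j} ,
$$
a representation in the basis whose coefficient of $B_0$ (the term $j=m$) equals $(-1)^{m-k}F_{m,k}$. Because $B_0, \dots, B_n$ form a $\Z$-basis of $E_n$, membership $c_{n,k} B_m^{(k)} \in E_n$ forces every coefficient $c_{n,k} F_{j,k}$ to be an integer; in particular $c_{n,k} F_{m,k} \in \Z$, i.e. $d_{m,k} \mid c_{n,k}$. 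Letting $m$ range over $\{k, \dots, n\}$ gives $L \mid c_{n,k}$, which together with the previous half yields $c_{n,k} = L$. For the concluding ``in particular'' clause, I note that each summand $1/(i_1 \cdots i_k)$ of $F_{m,k}$ has denominator dividing $i_1 \cdots i_k$, which in turn divides $q_{n,k}$ whenever $i_1 + \dots + i_k = m \le n$; hence $q_{n,k} F_{m,k} \in \Z$, so $d_{m,k} \mid q_{n,k}$ for every such $m$, and therefore $c_{n,k} = L \mid q_{n,k}$.
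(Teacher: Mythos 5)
Your proposal is correct and follows essentially the same route as the paper: the same expansion $D^k = \sum_{m \geq k} (-1)^{m-k} F_{m,k}\,\Delta^m$ obtained by raising \eqref{eq3} to the $k$-th power, the same argument for $c_{n,k} \mid L$, the same test polynomials $B_m$ for the converse, and the same observation for the ``in particular'' clause. The only cosmetic difference is that the paper extracts the coefficient $(-1)^{m-k}F_{m,k}$ by evaluating $c_{n,k}B_m^{(k)}$ at $0$ (via \eqref{eq9}), whereas you invoke the uniqueness of coefficients in the $\Z$-basis $B_0,\dots,B_n$ of $E_n$; both are valid.
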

\begin{proof}
Let $n , k \in \N$ be fixed. For simplicity, we pose $\ell_{n , k} := \lcm\{d_{m , k} ;~ k \leq m \leq n\}$. To show that $c_{n , k} = \ell_{n , k}$, we will show that $\ell_{n , k}$ is a multiple of $c_{n , k}$ and then that $c_{n , k}$ is a multiple of $\ell_{n , k}$. \\[1mm]
\textbullet{} Let us show that $\ell_{n , k}$ is a multiple of $c_{n , k}$; that is $\ell_{n , k} \in \I_{n , k}$ (in view of \eqref{eq4}). So, according to the definition of $\I_{n , k}$, this is equivalent to show the property:
\begin{equation}\label{eq5}
\forall P \in E_n :~ \ell_{n , k} P^{(k)} \in E_n .
\end{equation}
Let us show \eqref{eq5}. So, let $P \in E_n$ and show that $\ell_{n , k} P^{(k)} \in E_n$. From \eqref{eq3}, we derive the following identity of linear operators on $\C[X]$:
\begin{eqnarray*}
D^k & = & \left(\sum_{i \in \N^*} \frac{(-1)^{i - 1}}{i} \Delta^i\right)^k \\
& = & \left(\sum_{i_1 \in \N^*} \frac{(-1)^{i_1 - 1}}{i_1} \Delta^{i_1}\right) \left(\sum_{i_2 \in \N^*} \frac{(-1)^{i_2 - 1}}{i_2} \Delta^{i_2}\right) \cdots \left(\sum_{i_k \in \N^*} \frac{(-1)^{i_k - 1}}{i_k} \Delta^{i_k}\right) \\
& = & \sum_{i_1 , \dots , i_k \in \N^*} \frac{(-1)^{i_1 + \dots + i_k - k}}{i_1 i_2 \cdots i_k} \Delta^{i_1 + \dots + i_k} .
\end{eqnarray*}
Applying this to $P$, we obtain (since $\Delta^i P = 0$ for $i > n$) that:
\begin{eqnarray}
P^{(k)} & = & \sum_{\begin{subarray}{c}
i_1 , \dots , i_k \in \N^* \\
i_1 + \dots + i_k \leq n
\end{subarray}} \frac{(-1)^{i_1 + \dots + i_k - k}}{i_1 i_2 \cdots i_k} \Delta^{i_1 + \dots + i_k} P \notag \\
& = & \sum_{k \leq m \leq n} (-1)^{m - k} \left(\sum_{\begin{subarray}{c}
i_1 , \dots , i_k \in \N^* \\
i_1 + \dots + i_k = m
\end{subarray}} \frac{1}{i_1 i_2 \cdots i_k}\right) \Delta^m P \notag \\
& = & \sum_{k \leq m \leq n} (-1)^{m - k} F_{m , k} \, \Delta^m P . \label{eq6} 
\end{eqnarray}
Because $\Delta^m P \in E_n$ for any $m \in \N$ (since $E_n$ is stable by $\Delta$) and $\ell_{n , k} F_{m , k} \in \Z$ for any $m \in \{k , k + 1 , \dots , n\}$ (according to the definition of $\ell_{n , k}$), the last identity shows that $\ell_{n , k} P^{(k)} \in E_n$, as required. \\[1mm]
\textbullet{} Now, let us show that $c_{n , k}$ is a multiple of $\ell_{n , k}$. By definition of $\ell_{n , k}$, this is equivalent to show that $c_{n , k}$ is a multiple of each of the positive integers $d_{m , k}$ ($k \leq m \leq n$). So, let $m_0 \in \{k , \dots , n\}$ be fixed and show that $c_{n , k}$ is a multiple of $d_{m_0 , k}$. Since $m_0 \leq n$, we have $B_{m_0} \in E_n$; thus (by definition of $c_{n , k}$): $c_{n , k} B_{m_0}^{(k)} \in E_n$. This implies (in particular) that $c_{n , k} B_{m_0}^{(k)}(0) \in \Z$. But, by applying \eqref{eq6} for $P = B_{m_0}$ and using \eqref{eq9}, we get
\begin{eqnarray*}
B_{m_0}^{(k)}(0) & = & \sum_{1 \leq m \leq n} (-1)^{m - k} F_{m , k} \left(\Delta^m B_{m_0}\right)(0) \\
& = & (-1)^{m_0 - k} F_{m_0 , k} .
\end{eqnarray*}
Thus $c_{n , k} \cdot (-1)^{m_0 - k} F_{m_0 , k} \in \Z$, implying that $c_{n , k}$ is a multiple of $\den(F_{m_0 , k}) = d_{m_0 , k}$, as required. So, the first part of the theorem is proved. \\
Next, the second part of the theorem immediately follows from its first part and the trivial fact that $d_{m , k}$ divides $\lcm\{i_1 i_2 \cdots i_k |~ i_1 , \dots , i_k \in \N^* , i_1 + \dots + i_k = m\}$, which divides $q_{n , k}$ (for every $m , k \in \N$, with $k \leq m \leq n$). This achieves the proof of the theorem.
\end{proof}

Concerning the divisibility relations between the $c_{n , k}$'s and the $q_{n , k}$'s, we also have the following result:
\begin{thm}\label{t3}
For every natural numbers $n$ and $k$ such that $n \geq k$, the positive integer $c_{n , k}$ is a multiple of the rational number $\frac{q_{n , k}}{k!}$.
\end{thm}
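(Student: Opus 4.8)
The plan is to argue one prime at a time and, via Theorem \ref{t2}, to reduce the whole statement to a single well-chosen value of $m$. Fix a prime $p$. Saying that $c_{n,k}$ is a multiple of $\frac{q_{n,k}}{k!}$ amounts to $v_p(c_{n,k}) \ge v_p(q_{n,k}) - v_p(k!)$ for every $p$. By Theorem \ref{t2} we have $v_p(c_{n,k}) = \max_{k \le m \le n} v_p(d_{m,k})$, and since $d_{m,k} = \den(F_{m,k})$ we have $v_p(d_{m,k}) = \max(0, -v_p(F_{m,k}))$. Hence it is enough to exhibit one index $m_0 \in \{k, k+1, \dots, n\}$ with $v_p(F_{m_0,k}) \le v_p(k!) - v_p(q_{n,k})$. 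Write $A := v_p(q_{n,k})$ for brevity.

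The value $m_0$ is forced upon us by the structure of $q_{n,k}$. By definition, $A$ is the largest value of $\sum_{j=1}^{k} v_p(i_j)$ over all $k$-tuples of positive integers with $i_1 + \dots + i_k \le n$; replacing each $i_j$ by $p^{v_p(i_j)}$ never increases the sum of the parts, so this maximum is attained at a tuple of pure prime powers $(p^{a_1}, \dots, p^{a_k})$. Among all optimal exponent-tuples I would choose one for which $\sum_j p^{a_j}$ is least, and set $m_0 := \sum_{j=1}^{k} p^{a_j}$; then $k \le m_0 \le n$. This minimality has a useful consequence: if a tuple with $i_1 + \dots + i_k = m_0$ satisfies $v_p(i_1 \cdots i_k) = A$, then, writing $b_j := v_p(i_j)$, one has $i_j \ge p^{b_j}$ and $\sum_j b_j = A$, so $(p^{b_j})$ is again optimal and $m_0 = \sum_j i_j \ge \sum_j p^{b_j} \ge m_0$; equality forces $i_j = p^{b_j}$ for all $j$. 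In other words, the tuples realizing the maximal valuation $A$ of the product are exactly the prime-power tuples summing to $m_0$.

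Now I would expand $p^{A} F_{m_0,k} = \sum_{i_1 + \dots + i_k = m_0} p^{A}/(i_1 \cdots i_k)$ and read it $p$-adically. Each prime-power tuple contributes $p^A/p^A = 1$ exactly, while every other tuple contributes a term of valuation $A - v_p(i_1 \cdots i_k) \ge 1$. Thus $p^{A} F_{m_0,k} = N + pR$, where $N$ is the number of prime-power tuples summing to $m_0$ and $R$ is a $p$-adic integer, and the target inequality becomes $v_p(N + pR) \le v_p(k!)$. The number $N$ counts the orderings of prime-power multisets; for a single exponent-multiset with multiplicities $\mu_e$ it equals the multinomial coefficient $k!/\prod_e \mu_e!$, whose $p$-adic valuation is at most $v_p(k!)$. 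When $p \nmid N$ this already finishes the argument: $v_p(N + pR) = 0 \le v_p(k!)$, so $v_p(F_{m_0,k}) = -A$ and $v_p(d_{m_0,k}) = A \ge A - v_p(k!)$.

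The hard part is the case $p \mid N$, i.e. $v_p(k!/\prod_e \mu_e!) \ge 1$. Here the dominant contribution $N$ sits at the same height as some of the terms hidden in $pR$, so cancellation is possible and one must rule out that it lifts $v_p(N + pR)$ above $v_p(k!)$. This is precisely where the factor $k!$ in the statement is indispensable: the slack $v_p(k!)$ is exactly calibrated to the maximal amount $v_p(k!/\prod_e \mu_e!)$ by which the $k!$ permutations of a dominant prime-power tuple can merge. I expect to close this case by computing $p^{A} F_{m_0,k}$ modulo $p^{v_p(k!)+1}$ and, where a genuine collision threatens, by exploiting the remaining freedom in the optimal tuple (taking the exponents $a_j$ as equal as possible tends to make the dominant multiset unique and to push the surviving terms strictly higher). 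Finally I would record the clean identity $F_{m,k} = \frac{k!}{m!}\,|s(m,k)|$, obtained by differentiating $B_m$ exactly $k$ times and evaluating at $0$ as in \eqref{eq9}, together with the Stirling expansion; it recasts the whole problem as the estimate $v_p(|s(m_0,k)|) \le v_p(m_0!) - A$ and ties the computation to the material of §\ref{sec4}.
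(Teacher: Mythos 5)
Your reduction is correct as far as it goes: passing to $p$-adic valuations, using Theorem \ref{t2} to write $v_p(c_{n,k}) = \max_{k \le m \le n} v_p(d_{m,k})$ with $v_p(d_{m,k}) = \max\left(0, -v_p(F_{m,k})\right)$, and choosing $m_0$ as the minimal sum of a valuation-maximizing prime-power tuple are all sound steps; moreover, by convexity of $x \mapsto p^x$ the optimal exponent multiset of sum $m_0$ is unique (the ``balanced'' one, with exponents differing by at most $1$), so your dominant count is $N = \binom{k}{r}$ where $A = qk + r$. But the proof has a genuine, self-admitted gap precisely where the whole difficulty of the theorem lives: the case $p \mid N$. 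There you must show $v_p(N + pR) \le v_p(k!)$, and nothing in your argument controls $R$ modulo $p$: since $v_p(N) \ge 1$, the tail $pR$ can a priori cancel $N$ to arbitrary depth. Your proposed remedies (``computing modulo $p^{v_p(k!)+1}$'', ``exploiting the remaining freedom in the optimal tuple'') are statements of intent, not arguments; in fact, since the balanced multiset is the \emph{unique} optimal one of sum $m_0$, there is no remaining freedom at $m_0$ to exploit, and ruling out cancellation requires genuine control of the sub-dominant terms (already for $k = 2$, $p = 2$, $A$ odd, one needs a separate parity argument to see that all non-optimal tuples of sum $m_0$ have product-valuation at most $A - 2$). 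Until that case is closed, the statement is not proved. There is also a secondary slip: it is not true that the prime-power tuples summing to $m_0$ are exactly the tuples of valuation $A$, nor that each such tuple contributes $1$. A prime-power tuple of sum $m_0$ can have valuation $< A$: for $p = 2$, $k = 3$, $n = m_0 = 10$, the multiset $\{4,4,2\}$ has valuation $5 = A$ while $\{8,1,1\}$ also sums to $10$ but has valuation $3$, so its orderings contribute $4$, not $1$. Thus $N$ must count only the \emph{optimal} prime-power tuples; this is repairable via the convexity argument, but as written your decomposition $p^A F_{m_0,k} = N + pR$ rests on a false dichotomy.

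It is worth noting how the paper sidesteps all of this. Its proof never touches valuations or the numbers $F_{m,k}$: for each tuple $i_1, \dots, i_k \in \N^*$ with $i_1 + \dots + i_k \le n$ it forms the polynomial $P = B_{i_1} B_{i_2} \cdots B_{i_k} \in E_n$, observes that each factor begins with $\frac{(-1)^{i-1}}{i} X + \dots$ (by \eqref{eq10}), hence $P^{(k)}(0) = \pm \frac{k!}{i_1 i_2 \cdots i_k}$, and concludes from $c_{n,k} P^{(k)}(0) \in \Z$ that $i_1 i_2 \cdots i_k$ divides $k!\, c_{n,k}$; taking the lcm over all tuples gives $q_{n,k} \mid k!\, c_{n,k}$. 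Your strategy is instead a generalization of the paper's key Lemma \ref{l3} (which is exactly your situation with all exponents equal, $r = 0$, $N = 1$, where no cancellation can occur); that lemma is what the paper uses for Theorem \ref{t4}, not for Theorem \ref{t3}, and your attempt shows why: once $r \neq 0$ the leading count $N$ can be divisible by $p$ and the clean valuation identity breaks down. If you want to salvage your route, the identity $F_{m,k} = \frac{k!}{m!}\,|s(m,k)|$ you quote (Proposition \ref{p1}) reduces everything to the estimate $v_p(|s(m_0,k)|) \le v_p(m_0!) - A$, but that inequality is not easier than what you are missing.
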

\begin{proof}
Let $n , k \in \N$ be fixed such that $n \geq k$. Show that the positive integer $c_{n , k}$ is a multiple of the rational number $\frac{q_{n , k}}{k!}$ is equivalent to show that the positive integer $k! c_{n , k}$ is a multiple of the positive integer $q_{n , k}$, which is equivalent (according to the definition of $q_{n , k}$) to show that $k! c_{n , k}$ is a multiple of each of the positive integers having the form $i_1 i_2 \cdots i_k$, where $i_1 , \dots , i_k \in \N^*$ and $i_1 + \dots + i_k \leq n$. So, let $i_1 , \dots , i_k \in \N^*$ such that $i_1 + \dots + i_k \leq n$ and show that $k! c_{n , k}$ is a multiple of the product $i_1 i_2 \cdots i_k$. To do so, let us consider the integer-valued polynomial
$$
P(X) ~:=~ \binom{X}{i_1} \binom{X}{i_2} \cdots \binom{X}{i_k} ~=~ B_{i_1}(X) B_{i_2}(X) \cdots B_{i_k}(X)
$$
whose degree is $i_1 + \dots + i_k \leq n$, showing that $P \in E_n$. \\
Since the expansion of each polynomial $B_i$ ($i \in \N^*$) in the canonical basis $(1 , X , X^2 , \dots)$ of $\Q[X]$ begins with
$$
\frac{(-1)^{i - 1}}{i} X + \dots
$$
(because $B_i(0) = 0$ and $B_i'(0) = \frac{(-1)^{i - 1}}{i}$, according to \eqref{eq10}) then the expansion of the polynomial $P$ in the canonical basis of $\Q[X]$ begins with
$$
\frac{(-1)^{i_1 - 1}}{i_1} \cdot \frac{(-1)^{i_2 - 1}}{i_2} \cdots \frac{(-1)^{i_k - 1}}{i_k} X^k + \dots ~=~ \pm \frac{1}{i_1 i_2 \cdots i_k} X^k + \dots
$$
It follows from this fact that we have
$$
P^{(k)}(0) ~=~ \pm \frac{k!}{i_1 i_2 \cdots i_k} .
$$
On the other hand, since $c_{n , k} P^{(k)} \in E_n$, we have $c_{n , k} P^{(k)}(0) \in \Z$; that is
$$
\pm c_{n , k} \frac{k!}{i_1 i_2 \cdots i_k} \in \Z ,
$$
showing that $k! c_{n , k}$ is a multiple of $i_1 i_2 \cdots i_k$, as required. This completes the proof of the theorem.
\end{proof}

\begin{rmq}
Theorem \ref{t1} can immediately follow from Theorems \ref{t2} and \ref{t3}. Indeed, by applying the second part of Theorem \ref{t2} and Theorem \ref{t3} for $k = 1$, we obtain that for any $n \in \N$, the positive integer $c_{n , 1} = c_n$ is both a divisor and a multiple of the positive integer $q_{n , 1} = \lcm(1 , 2 , \dots , n)$. So, we have $c_n = \lcm(1 , 2 , \dots , n)$ ($\forall n \in \N$), which is nothing else the result of Theorem \ref{t1}.
\end{rmq}

Now, we are going to solve Problem 3 and prove the result announced in the introduction. We have the following:
\begin{thm}\label{t4}
For every natural number $n$, we have
$$
\lambda_n ~=~ q_n ~=~ \prod_{p \text{ prime}} p^{\left\lfloor\frac{n}{p}\right\rfloor} .
$$
\end{thm}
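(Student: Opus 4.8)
The plan is to reduce Problem~3 to the already-solved Problem~2 and then to finish with an explicit $p$-adic valuation computation. First I would introduce, in parallel with §\ref{sec3}, the ideal $\I_n' := \{a \in \Z : \forall P \in E_n, \forall k \in \N,\ a P^{(k)} \in E_n\}$, which is plainly the intersection $\bigcap_{k \in \N} \I_{n,k}$. Since every $P \in E_n$ has degree $\leq n$, we have $P^{(k)} = 0$ for $k > n$, so $\I_{n,k} = \Z$ there and the intersection runs only over $0 \leq k \leq n$. Being an intersection of the principal ideals $c_{n,k}\Z$, it equals $\big(\lcm\{c_{n,k} : 0 \leq k \leq n\}\big)\Z$; hence $\lambda_n = \lcm\{c_{n,k} : 0 \leq k \leq n\}$. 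It then remains to prove the two divisibilities $\lambda_n \mid q_n$ and $q_n \mid \lambda_n$, and to read off the product formula.

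The divisibility $\lambda_n \mid q_n$ is immediate from Theorem~\ref{t2}: each $c_{n,k}$ divides $q_{n,k}$, which divides $q_n$, so their least common multiple divides $q_n$. For the closed form I would compute $v_p(q_n)$ straight from the definition of $q_n$ as the lcm of all products $i_1 \cdots i_k$ with $i_1 + \dots + i_k \leq n$. Using the elementary bound $v_p(i) \leq i/p$, valid for every $i \geq 1$ with equality exactly when $i = p$, any such product satisfies $\sum_j v_p(i_j) \leq \tfrac1p \sum_j i_j \leq n/p$, hence $\leq \lfloor n/p \rfloor$; and the product of $\lfloor n/p \rfloor$ copies of $p$ attains this. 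Thus $v_p(q_n) = \lfloor n/p \rfloor$ for every prime $p$, giving $q_n = \prod_{p} p^{\lfloor n/p \rfloor}$.

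The heart of the matter is the reverse divisibility $q_n \mid \lambda_n$, i.e. $v_p(\lambda_n) \geq \lfloor n/p \rfloor$ for each prime $p$. Here I would stress that Theorem~\ref{t3} is \emph{not} strong enough: it only yields $v_p(\lambda_n) \geq \lfloor n/p \rfloor - v_p(k!)$ for the relevant order $k$, and the factor $k!$ genuinely spoils the estimate (already for $p=2$, $n=10$). Instead I would work through Theorem~\ref{t2} and the explicit denominators $d_{m,k} = \den(F_{m,k})$. Fix $p$, set $a := \lfloor n/p \rfloor$ and $m := pa \leq n$, and examine $F_{pa,a} = \sum_{i_1 + \dots + i_a = pa} \tfrac{1}{i_1 \cdots i_a}$. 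A term has valuation $-\sum_j v_p(i_j)$, and the sharp inequality $v_p(i) \leq (i-1)/(p-1)$ (equality iff $i \in \{1,p\}$) gives $\sum_j v_p(i_j) \leq (pa-a)/(p-1) = a$, with equality forcing every $i_j \in \{1,p\}$ and then, by $\sum_j i_j = pa$, forcing every $i_j = p$. So the single tuple $(p, \dots, p)$ is the unique term of minimal valuation $-a$; by the ultrametric inequality the whole sum has $v_p(F_{pa,a}) = -a$, whence $v_p(d_{pa,a}) = a$. Since $a \leq pa \leq n$, Theorem~\ref{t2} yields $d_{pa,a} \mid c_{n,a}$, so $v_p(c_{n,a}) \geq a$ and therefore $v_p(\lambda_n) \geq a = v_p(q_n)$. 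Combining with $\lambda_n \mid q_n$ closes the argument.

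The step I expect to be the main obstacle is precisely the no-cancellation claim inside $F_{pa,a}$: a priori the many positive terms sharing a small valuation could combine $p$-adically to something of higher valuation (as $\tfrac12 + \tfrac12$ does $2$-adically), which would break the lower bound. What rescues it is the \emph{uniqueness} of the minimal-valuation tuple, and this in turn rests entirely on the equality analysis of $v_p(i) \leq (i-1)/(p-1)$. I expect that verifying this uniqueness, rather than the routine bookkeeping with ideals and least common multiples, will be the delicate point; everything else is a matter of assembling Theorem~\ref{t2} with two elementary valuation inequalities.
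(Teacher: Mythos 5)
Your proposal is correct and follows essentially the same path as the paper: the same ideal-theoretic reduction to $\lambda_n = \lcm\{c_{n,k} \, ; \, 0 \leq k \leq n\}$, the same use of Theorem \ref{t2} to get $\lambda_n \mid q_n$ and to pass through the denominators $d_{m,k}$, the same computation $v_p(q_n) = \lfloor n/p \rfloor$, and the same key no-cancellation lemma $v_p\left(F_{pa,a}\right) = -a$ (the paper's Lemma \ref{l3}), proved in both cases by showing that $(p, \dots, p)$ is the \emph{unique} tuple of minimal valuation. The only difference is in the elementary inequality behind that uniqueness --- you use $v_p(i) \leq (i-1)/(p-1)$ with equality iff $i \in \{1, p\}$, while the paper uses $v_p(i) \leq p^{v_p(i)-1} \leq i/p$ (its Lemma \ref{l2}) and argues that equality forces each $i_j$ to be a power of $p$ other than $1$, hence $\geq p$; both close identically via the sum constraint, and your incidental claim that $v_p(i) \leq i/p$ has equality exactly when $i = p$ is slightly off ($p = 2$, $i = 4$ also gives equality) but is never actually used in your argument.
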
 

The proof of Theorem \ref{t4} needs the two following lemmas.

\begin{lemma}\label{l2}
For every positive integer $a$ and every prime number $p$, we have
$$
v_p(a) ~\leq~ \frac{a}{p} .
$$
\end{lemma}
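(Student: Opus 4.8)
The plan is to reduce the claimed inequality to a purely elementary estimate on powers of primes, so that no real machinery is needed. Writing $e := v_p(a)$, the definition of the $p$-adic valuation gives $p^e \mid a$, hence $a \geq p^e$. If $e = 0$ the inequality $v_p(a) \leq \frac{a}{p}$ holds trivially (the left-hand side is $0$ while the right-hand side is positive), so I may assume $e \geq 1$. In that case it suffices to show $pe \leq a$, and since $a \geq p^e$ this follows once I establish the self-contained inequality $p^e \geq pe$, equivalently $p^{e-1} \geq e$.

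First I would dispose of the valuation bookkeeping exactly as above, reducing the whole statement to the claim $p^{e-1} \geq e$ for every prime $p$ and every integer $e \geq 1$. Then I would observe that, $p$ being prime, we have $p \geq 2$, whence $p^{e-1} \geq 2^{e-1}$, and it remains only to check $2^{e-1} \geq e$ for all $e \geq 1$. This last inequality I would verify by a one-line induction on $e$: it holds for $e = 1$ (both sides equal $1$), and the inductive step uses $2^{e} = 2 \cdot 2^{e-1} \geq 2e \geq e + 1$ for $e \geq 1$, where the middle inequality is the inductive hypothesis.

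There is really no serious obstacle here; the only points requiring a little care are handling the edge case $e = 0$ separately and noting that the base case $e = 1$ is tight (equality $2^{0} = 1$), so the estimate cannot be pushed any further. Assembling the chain $a \geq p^e = p\,p^{e-1} \geq p\,e$ in the correct direction then yields $a \geq pe$, that is $v_p(a) = e \leq \frac{a}{p}$, which is exactly the assertion of the lemma.
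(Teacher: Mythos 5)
Your proof is correct and follows essentially the same route as the paper: both handle the case $v_p(a)=0$ separately and then reduce the claim to the chain $v_p(a)=e \leq 2^{e-1} \leq p^{e-1} \leq \frac{a}{p}$, the paper via writing $a = p^{e}b$ with $b \geq 1$, you via $p^{e} \mid a \Rightarrow a \geq p^{e}$. The only difference is cosmetic: you spell out the induction behind $2^{e-1} \geq e$, which the paper simply takes as known.
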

\begin{proof}
Let $a$ be a positive integer and $p$ be a prime number. Setting $\alpha := v_p(a)$, we can write $a = p^{\alpha} b$, where $b$ is a positive integer which is not a multiple of $p$. For $\alpha = 0$, the inequality of the lemma is trivial. Next, for $\alpha \geq 1$, we have
$$
v_p(a) ~=~ \alpha ~\leq~ 2^{\alpha - 1} ~\leq~ p^{\alpha - 1} ~\leq~ p^{\alpha - 1} b ~=~ \frac{a}{p} ,
$$
as required. This completes the proof of the lemma.
\end{proof}

\begin{lemma}[The key lemma]\label{l3}
For any positive integer $k$ and any prime number $p$, we have
$$
v_p\left(F_{k p , k}\right) ~=~ - k .
$$
\end{lemma}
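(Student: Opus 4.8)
The plan is to estimate the $p$-adic valuation of each individual summand of $F_{kp,k}$ and then invoke the ultrametric (strong triangle) inequality to read off the valuation of the whole sum. Writing a generic summand as $\frac{1}{i_1 i_2 \cdots i_k}$ with $i_1,\dots,i_k \in \N^*$ and $i_1 + \dots + i_k = kp$, its valuation is $-\sum_{j=1}^k v_p(i_j)$. The engine of the whole argument will be Lemma \ref{l2}: applying $v_p(i_j) \le \frac{i_j}{p}$ to each factor and summing gives
$$\sum_{j=1}^{k} v_p(i_j) ~\le~ \frac{1}{p}\sum_{j=1}^k i_j ~=~ \frac{kp}{p} ~=~ k ,$$
so every summand has valuation $\ge -k$. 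Consequently $v_p(F_{kp,k}) \ge -k$ by the non-archimedean inequality.

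For the reverse inequality I would pin down exactly which summands attain the extremal valuation $-k$. Since each factor already satisfies $v_p(i_j) \le \frac{i_j}{p}$, the total $\sum_j v_p(i_j)$ equals $k$ only when equality holds in every factor, i.e.\ $v_p(i_j) = \frac{i_j}{p}$ for all $j$. The crucial observation is that this forces $p \mid i_j$ (the right-hand side is a nonnegative integer), so one may write $i_j = p\, u_j$ with $u_j \in \N^*$; the constraint $\sum_j i_j = kp$ then becomes $\sum_j u_j = k$, and $k$ positive integers summing to $k$ must all equal $1$. Hence the only summand of valuation $-k$ is the single term with $i_1 = \dots = i_k = p$, namely $\frac{1}{p^k}$.

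Finally I would split $F_{kp,k} = \frac{1}{p^k} + R$, where $R$ collects all remaining summands. By the previous step every term of $R$ has valuation $\ge -k$ and none equals $-k$; since valuations are integers this means $v_p(R) \ge -(k-1) > -k$. The ultrametric property then yields $v_p(F_{kp,k}) = v_p\big(\tfrac{1}{p^k} + R\big) = -k$, as claimed.

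I expect the only genuinely delicate point to be the uniqueness in the second paragraph: one must be sure that no other composition---in particular none using factors divisible by high powers of $p$, which individually look ``very divisible''---can also reach valuation $-k$. Lemma \ref{l2} handles this cleanly because it bounds $v_p(i_j)$ linearly by $\frac{i_j}{p}$, so any excess divisibility concentrated in one factor is automatically paid for by the size constraint $\sum_j i_j = kp$; this is exactly why the equality case collapses to a single term and why there is no risk of cancellation in the concluding ultrametric step.
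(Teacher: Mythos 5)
Your proof is correct and follows essentially the same route as the paper's: bound the valuation of each summand $\frac{1}{i_1\cdots i_k}$ from below by $-k$ using Lemma \ref{l2}, show the bound is attained only at $(i_1,\dots,i_k)=(p,\dots,p)$, and conclude by the ultrametric property (the paper phrases this last step as the valuation of a sum with a unique minimizing term). The only cosmetic difference is in the equality analysis, where you argue via integrality of $i_j/p$ (forcing $p \mid i_j$) plus the constraint $\sum_j i_j = kp$, while the paper instead characterizes the equality case by each $i_j$ being a power of $p$ distinct from $1$; both collapse the extremal case to the single term $\frac{1}{p^k}$.
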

\begin{proof}
Let $k$ be a positive integer and $p$ be a prime number. We have by definition:
$$
F_{k p , k} ~:=~ \sum_{\begin{subarray}{c}
i_1 , \dots , i_k \in \N^* \\
i_1 + \dots + i_k = k p
\end{subarray}} \frac{1}{i_1 i_2 \cdots i_k} .
$$
For any given $k$-uplet $(i_1 , \dots , i_k) \in \N^{* k}$ such that $i_1 + \dots + i_k = k p$, we have
\begin{eqnarray*}
v_p\left(\frac{1}{i_1 i_2 \cdots i_k}\right) & = & - \sum_{r = 1}^{k} v_p(i_r) \\
& \geq & - \sum_{r = 1}^{k} p^{v_p(i_r) - 1} \\
& \geq & - \sum_{r = 1}^{k} \frac{i_r}{p} \\
& = & - \frac{k p}{p} ~=~ - k .
\end{eqnarray*}
Besides, the last series of inequalities shows that the equality $v_p(\frac{1}{i_1 i_2 \cdots i_k}) = - k$ holds if and only if we have for any $r \in \{1 , \dots , k\}$:
$$
v_p(i_r) ~=~ p^{v_p(i_r) - 1} ~~\text{and}~~ p^{v_p(i_r)} ~=~ i_r .
$$
This condition is clearly satisfied if $(i_1 , \dots , i_k) = (p , \dots , p)$. Conversely, if the condition in question is satisfied then each $i_r$ ($r = 1 , \dots , k$) is a power of $p$ and not equal to $1$. This implies in particular that $i_r \geq p$ ($\forall r \in \{1 , \dots , k\}$). But since $i_1 + \dots + i_k = k p$, we necessarily have $i_1 = i_2 = \dots = i_k = p$. Consequently, the equality $v_p(\frac{1}{i_1 i_2 \cdots i_k}) = - k$ holds if and only if $(i_1 , \dots , i_k) = (p , \dots , p)$. It follows (according to the elementary properties of the usual $p$-adic valuation) that:
$$
v_p(F_{k p , k}) ~=~ \min_{\begin{subarray}{c}
i_1 , \dots , i_k \in \N^* \\
i_1 + \dots + i_k = k p
\end{subarray}} v_p\left(\frac{1}{i_1 i_2 \cdots i_k}\right) ~=~ - k ,
$$
as required. The lemma is proved.
\end{proof}

\begin{proof}[Proof of Theorem \ref{t4}]
Let $n$ be a fixed natural number. From the definition of $\lambda_n$, it is clear that $\lambda_n$ is the smallest positive integer belonging to the ideal of $\Z$:
\begin{eqnarray*}
\bigcap_{k \in \N} \I_{n , k} & = & \bigcap_{k \in \N} c_{n , k} \Z ~~~~~~~~~~~~~~~~~~~~~~~~~~~~~~~~ \text{(according to \eqref{eq4})} \\
& = & \lcm\big\{c_{n , k} ~;~ k \in \N\big\} \Z \\
& = & \lcm\big\{c_{n , k} ~;~ 0 \leq k \leq n\big\} \Z ~~~~~~~~ \text{(since $c_{n , k} = 1$ for $k > n$)} .
\end{eqnarray*}
Thus, we have
\begin{equation}\label{eq11}
\lambda_n ~=~ \lcm\big\{c_{n , k} ~;~ 0 \leq k \leq n\big\} .
\end{equation}
Using Theorem \ref{t2}, we derive that:
\begin{equation}\label{eq12}
\lambda_n ~=~ \lcm\big\{d_{m , k} ~;~ 0 \leq k \leq m \leq n\big\} .
\end{equation}
Now, from \eqref{eq11} and the second part of Theorem \ref{t2}, we immediately derive that $\lambda_n$ divides the positive integer $\lcm\big\{q_{n , k} ~;~ 0 \leq k \leq n\big\} = q_n$. So, to complete the proof of Theorem \ref{t4}, it remains to prove that $\lambda_n$ is a multiple of $q_n$ and that $q_n = \prod_{p \text{ prime}} p^{\lfloor\frac{n}{p}\rfloor}$. This is clearly equivalent to prove that for any prime number $p$, we have
\begin{align}
v_p(q_n) & \leq~ \left\lfloor\frac{n}{p}\right\rfloor , \tag{$I$} \\
v_p(q_n) & \geq~ \left\lfloor\frac{n}{p}\right\rfloor , \tag{$II$} \\
v_p(\lambda_n) & \geq~ \left\lfloor\frac{n}{p}\right\rfloor . \tag{$III$} 
\end{align}
Let $p$ be a prime number and let us begin with proving $(I)$. Since
$$
q_n ~=~ \lcm\big\{i_1 i_2 \cdots i_k ~|~ k \in \N^* , i_1 , \dots , i_k \in \N^* , i_1 +\dots + i_k \leq n\big\} ,
$$
then we have
\begin{equation}\label{eq13}
v_p(q_n) ~=~ \max_{\begin{subarray}{c}
k \in \N^* , i_1 , \dots , i_k \in \N^* \\
i_1 + \dots + i_k \leq n
\end{subarray}} v_p(i_1 i_2 \cdots i_k) .
\end{equation}
Next, for any $k \in \N^*$ and any $i_1 , \dots , i_k \in \N^*$ such that $i_1 + \dots + i_k \leq n$, we have
\begin{eqnarray*}
v_p(i_1 i_2 \cdots i_k) & = & v_p(i_1) + v_p(i_2) + \dots + v_p(i_k) \\
& \leq & \frac{i_1}{p} + \frac{i_2}{p} + \dots + \frac{i_k}{p} ~~~~~~~~ \text{(according to Lemma \ref{l2})} \\
& = & \frac{i_1 + \dots + i_k}{p} ~\leq~ \frac{n}{p} ;
\end{eqnarray*}
that is (since $v_p(i_1 i_2 \cdots i_k) \in \N$):
$$
v_p\left(i_1 i_2 \cdots i_k\right) ~\leq~ \left\lfloor\frac{n}{p}\right\rfloor . 
$$
Hence (according to \eqref{eq13}):
$$
v_p(q_n) ~\leq~ \left\lfloor\frac{n}{p}\right\rfloor ,
$$
as required by $(I)$. \\
Now, let us prove $(II)$. For $p > n$, the inequality $(II)$ is trivial; so suppose that $p \leq n$ and let $\ell := \lfloor\frac{n}{p}\rfloor \geq 1$ and $i_1 = i_2 = \dots = i_{\ell} = p$. Since $\ell \in \N^*$, $i_1 , \dots , i_{\ell} \in \N^*$ and $i_1 + \dots + i_{\ell} = \ell p \leq n$, then $q_n$ is a multiple of the product $i_1 i_2 \cdots i_{\ell} = p^{\ell}$. Thus
$$
v_p(q_n) ~\geq~ v_p(p^{\ell}) = \ell = \left\lfloor\frac{n}{p}\right\rfloor ,
$$
as required by $(II)$. \\
Let us finally prove $(III)$. According to \eqref{eq12}, we have
\begin{eqnarray*}
v_p(\lambda_n) & = & \max_{0 \leq k \leq m \leq n} v_p\left(d_{m , k}\right) \\
& \geq & v_p\left(d_{p \lfloor\frac{n}{p}\rfloor , \lfloor\frac{n}{p}\rfloor}\right) \\
& \geq & - v_p\left(F_{p \lfloor\frac{n}{p}\rfloor , \lfloor\frac{n}{p}\rfloor}\right)
\end{eqnarray*}
(since $d_{p \lfloor\frac{n}{p}\rfloor , \lfloor\frac{n}{p}\rfloor}$ is the denominator of the rational number $F_{p \lfloor\frac{n}{p}\rfloor , \lfloor\frac{n}{p}\rfloor}$). But, from Lemma \ref{l3}, we have that:
$$
v_p\left(F_{p \lfloor\frac{n}{p}\rfloor , \lfloor\frac{n}{p}\rfloor}\right) ~=~ - \left\lfloor\frac{n}{p}\right\rfloor .
$$
Hence
$$
v_p(\lambda_n) ~\geq~ \left\lfloor\frac{n}{p}\right\rfloor ,
$$
as required by $(III)$. \\
This completes the proof of Theorem \ref{t4}.
\end{proof}

\section{Some other formulas for the numbers $F_{n , k}$ and tables of the $c_{n , k}$'s, the $q_{n , k}$'s and the $\lambda_n$'s}\label{sec4}

The following proposition gives some other useful formulas for the numbers $F_{n , k}$ ($n , k \in \N$, $n \geq k$):

\begin{prop}\label{p1}
For every $n , k \in \N$, with $n \geq k$, we have
\begin{eqnarray}
F_{n , k} & = & (-1)^{n + k} \frac{k!}{n!} s(n , k) ~=~ \frac{k!}{n!} \left\vert s(n , k)\right\vert , \label{eq14} \\[2mm]
F_{n , k} & = & \left\vert\binom{X}{n}^{(k)}(0)\right\vert . \label{eq15}
\end{eqnarray}
If in addition $k \geq 2$, then we have
\begin{equation}\label{eq16}
F_{n , k} ~=~ \frac{k!}{n} \sum_{1 \leq i_1 < i_2 < \dots < i_{k - 1} \leq n - 1} \frac{1}{i_1 i_2 \cdots i_{k - 1}} .
\end{equation}
\end{prop}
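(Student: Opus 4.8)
The plan is to route every formula through the single identity
$$\binom{X}{n}^{(k)}(0) ~=~ B_n^{(k)}(0) ~=~ (-1)^{n - k} F_{n , k} ,$$
which is exactly the computation already performed inside the proof of Theorem~\ref{t2}: apply \eqref{eq6} to $P = B_n$ and use \eqref{eq9}, i.e.\ $(\Delta^m B_n)(0) = \delta_{m n}$, so that only the term $m = n$ survives. Since $F_{n , k}$ is a sum of positive terms and is therefore nonnegative, taking absolute values in this identity yields \eqref{eq15} immediately, namely $\bigl|\binom{X}{n}^{(k)}(0)\bigr| = F_{n , k}$.

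To obtain \eqref{eq14}, I would expand $B_n$ in the canonical basis by means of the Stirling numbers of the first kind. From $X (X - 1) \cdots (X - n + 1) = \sum_{j = 0}^{n} s(n , j) X^j$ we get $B_n(X) = \frac{1}{n!} \sum_{j = 0}^{n} s(n , j) X^j$, and extracting the coefficient of $X^k$ via the $k$-th derivative at $0$ gives $B_n^{(k)}(0) = \frac{k!}{n!} s(n , k)$. Comparing with the displayed identity above produces $(-1)^{n - k} F_{n , k} = \frac{k!}{n!} s(n , k)$, hence $F_{n , k} = (-1)^{n - k} \frac{k!}{n!} s(n , k) = (-1)^{n + k} \frac{k!}{n!} s(n , k)$, since $n - k$ and $n + k$ have the same parity. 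The second equality of \eqref{eq14} then follows from the standard sign rule $s(n , k) = (-1)^{n - k} |s(n , k)|$, which gives $(-1)^{n + k} s(n , k) = |s(n , k)|$.

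For \eqref{eq16} (with $k \geq 2$) I would first rewrite \eqref{eq14} using the unsigned Stirling numbers and then pass to elementary symmetric functions. Identifying $|s(n , k)| = e_{n - k}(1 , 2 , \dots , n - 1)$, the coefficient of $X^k$ in $\prod_{i = 0}^{n - 1} (X + i)$ (the factor $i = 0$ kills every monomial containing it), I would invoke the reciprocal duality $\frac{e_r(a_1 , \dots , a_m)}{a_1 \cdots a_m} = e_{m - r}(a_1^{-1} , \dots , a_m^{-1})$ with $m = n - 1$ and $r = n - k$, so that $m - r = k - 1$. This converts $\frac{|s(n , k)|}{(n - 1)!} = \frac{e_{n - k}(1 , \dots , n - 1)}{1 \cdot 2 \cdots (n - 1)}$ into $e_{k - 1}\bigl(1 , \tfrac{1}{2} , \dots , \tfrac{1}{n - 1}\bigr) = \sum_{1 \leq i_1 < \dots < i_{k - 1} \leq n - 1} \frac{1}{i_1 \cdots i_{k - 1}}$. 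Writing \eqref{eq14} as $F_{n , k} = \frac{k!}{n!} |s(n , k)| = \frac{k!}{n} \cdot \frac{|s(n , k)|}{(n - 1)!}$ then gives exactly \eqref{eq16}.

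The Stirling expansion and the coefficient extraction are routine; the one step demanding genuine care is the last paragraph, where I must correctly match $|s(n , k)|$ with $e_{n - k}(1 , \dots , n - 1)$ (keeping the signed versus unsigned conventions straight) and apply the reciprocal duality with the right indices. I expect that index bookkeeping — pairing $r = n - k$ against $m - r = k - 1$ — to be the main place an error could creep in, since everything else reduces to identities already established earlier in the paper.
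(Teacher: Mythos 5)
Your proof is correct, but it reaches \eqref{eq14} by a genuinely different route than the paper. The paper proves \eqref{eq14} first, via a two-variable generating function computation: it shows $\sum_{n \geq k} F_{n,k} X^n \frac{Y^k}{k!} = e^{-Y\log(1-X)} = (1-X)^{-Y}$ and then identifies coefficients against the generalized binomial expansion written in terms of Stirling numbers; only afterwards does it deduce \eqref{eq15} from \eqref{eq14} together with the expansion $\binom{X}{n}^{(k)}(0) = \frac{k!}{n!}s(n,k)$. You reverse this order: you extract the identity $B_n^{(k)}(0) = (-1)^{n-k}F_{n,k}$ directly from the operator formula \eqref{eq6} and the orthogonality relation \eqref{eq9} (the same computation carried out with $m_0 = n$ in the proof of Theorem \ref{t2}), get \eqref{eq15} at once from the nonnegativity of $F_{n,k}$, and then obtain \eqref{eq14} by comparing with the Stirling expansion of $B_n^{(k)}(0)$. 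This is shorter and avoids formal power series entirely, at the cost of leaning on machinery from Section \ref{sec3}; the paper's generating-function argument is self-contained within its proposition and yields the closed form $(1-X)^{-Y}$ as a byproduct of independent interest. For \eqref{eq16} your argument is essentially the paper's in different clothing: the paper's manipulation of the coefficient of $X^{k-n}$ in $\bigl(\frac{1}{X}+1\bigr)\cdots\bigl(\frac{1}{X}+\frac{1}{n-1}\bigr)$ is precisely your reciprocal duality between the elementary symmetric functions $e_{n-k}(1,\dots,n-1)$ and $e_{k-1}\bigl(1,\tfrac{1}{2},\dots,\tfrac{1}{n-1}\bigr)$, and your index bookkeeping ($r = n-k$, $m - r = k-1$) is carried out correctly.
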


\begin{proof}
Let us prove Formula \eqref{eq14}. To do so, consider the following formal power series generating function in two indeterminates:
$$
S(X , Y) ~:=~ \sum_{\begin{subarray}{c}
n , k \in \N \\
n \geq k
\end{subarray}} F_{n , k} X^n \frac{Y^k}{k!} .
$$
Using the definition of the $F_{n , k}$'s, we have
\begin{eqnarray*}
S(X , Y) & = & \sum_{\begin{subarray}{c}
n , k \in \N \\
n \geq k
\end{subarray}} \left(\sum_{\begin{subarray}{c}
i_1 , \dots , i_k \in \N^* \\
i_1 + \dots + i_k = n
\end{subarray}} \frac{1}{i_1 i_2 \cdots i_k}\right) X^n \frac{Y^k}{k!} \\
& = & \sum_{k \in \N} \left(\sum_{i_1 , \dots , i_k \in \N^*} \frac{X^{i_1 + \dots + i_k}}{i_1 i_2 \cdots i_k}\right) \frac{Y^k}{k!} \\
& = & \sum_{k \in \N} \left(\sum_{i = 1}^{+ \infty} \frac{X^i}{i}\right)^k \frac{Y^k}{k!} \\
& = & \sum_{k \in \N} \left(- \log(1 - X)\right)^k \frac{Y^k}{k!}
\end{eqnarray*}
\begin{eqnarray*}
& = & \sum_{k \in \N} \frac{\left(- Y \log(1 - X)\right)^k}{k!} \\
& = & e^{- Y \log(1 - X)} \\
& = & (1 - X)^{- Y} \\
& = & \sum_{n = 0}^{+ \infty} \binom{- Y}{n} (- X)^n ~~~~~~~~ \text{(according to the generalized binomial theorem)} \\
& = & \sum_{n = 0}^{+ \infty} \frac{1}{n!} \left(\sum_{k = 0}^{n} s(n , k) (- Y)^k\right) (- X)^n \\
& = & \sum_{\begin{subarray}{c}
n , k \in \N \\
n \geq k
\end{subarray}} \frac{(-1)^{n + k}}{n!} s(n , k) X^n Y^k .
\end{eqnarray*}
By identifying the coefficients in the first and last formal power series of the above series of equalities, we derive that for all $n , k \in \N$, with $n \geq k$, we have
$$
F_{n , k} ~=~ (-1)^{n + k} \frac{k!}{n!} s(n , k) ,
$$
as required by \eqref{eq14}. The second equality of \eqref{eq14} simply follows from the positivity of the $F_{n , k}$'s. \\
Now, let us prove Formula \eqref{eq15}. For any given $n , k \in \N$, with $n \geq k$, we have
$$
\binom{X}{n} ~=~ \frac{1}{n!} X (X - 1) \cdots (X - n + 1) ~=~ \frac{1}{n!} \sum_{i = 0}^{n} s(n , i) X^i .
$$
Thus
$$
\binom{X}{n}^{(k)} ~=~ \frac{1}{n!} \sum_{k \leq i \leq n} s(n , i) i (i - 1) \cdots (i - k + 1) X^{i - k} ,
$$
which gives
$$
\binom{X}{n}^{(k)}(0) ~=~ \frac{k!}{n!} s(n , k)
$$
and concludes (according to \eqref{eq14}) that
$$
\left\vert\binom{X}{n}^{(k)}(0)\right\vert ~=~ \frac{k!}{n!} |s(n , k)| ~=~ F_{n , k} ,
$$
as required by \eqref{eq15}. \\
Let us finally prove \eqref{eq16}. For any given $n , k \in \N$, with $n \geq k \geq 2$, the sum
$$
\sum_{1 \leq i_1 < i_2 < \dots < i_{k - 1} \leq n - 1} \frac{1}{i_1 i_2 \cdots i_{k - 1}}
$$
is nothing else the coefficient of $X^{n - k}$ in the polynomial $(X + \frac{1}{1}) (X + \frac{1}{2}) \cdots (X + \frac{1}{n - 1})$ of $\Q[X]$, which is also the coefficient of $\frac{1}{X^{n - k}} = X^{k - n}$ in the rational fraction $(\frac{1}{X} + \frac{1}{1}) (\frac{1}{X} + \frac{1}{2}) \cdots (\frac{1}{X} + \frac{1}{n - 1})$ of $\Q[\frac{1}{X}]$, when expanded in the basis $(1 , \frac{1}{X} , \frac{1}{X^2} , \dots)$. But since we have
\begin{eqnarray*}
\left(\frac{1}{X} + \frac{1}{1}\right) \left(\frac{1}{X} + \frac{1}{2}\right) \cdots \left(\frac{1}{X} + \frac{1}{n - 1}\right) & = & \frac{1}{(n - 1)! X^n} X (X + 1) \cdots (X + n - 1) \\
& = & \frac{1}{(n - 1)! X^n} \sum_{i = 0}^{n} |s(n , i)| X^i \\
& = & \sum_{i = 0}^{n} \frac{|s(n , i)|}{(n - 1)!} X^{i - n} ,
\end{eqnarray*}
then the coefficient of $X^{k - n}$ in this expression is also equal to $\frac{|s(n , k)|}{(n - 1)!}$. Thus, we have
$$
\frac{|s(n , k)|}{(n - 1)!} ~=~ \sum_{1 \leq i_1 < i_2 < \dots < i_{k - 1} \leq n - 1} \frac{1}{i_1 i_2 \cdots i_{k - 1}} .
$$
Then, Formula \eqref{eq16} immediately follows by using the second equality of Formula \eqref{eq14}. \\
The proof of the proposition is complete.
\end{proof}

\subsection*{Tables}
Now, we explain how we can easily calculate the numbers $F_{n , k}$, $d_{n , k}$, $c_{n , k}$, $q_{n , k}$ and $\lambda_n$ ($n , k \in \N$, $n \geq k$). Starting from the well-known recurrent relation for Stirling numbers of the first kind:
$$
s(n + 1 , k) ~=~ s(n , k - 1) - n \, s(n , k) ~~~~~~~~~~ (\forall n , k \in \N^*, \text{ with } n \geq k)
$$
and using Formula \eqref{eq14} of Proposition \ref{p1}, we immediately show that the rational numbers $F_{n , k}$ satisfy the recurrent relation:
\begin{equation}\label{eq17}
F_{n + 1 , k} ~=~ \frac{k}{n + 1} F_{n , k - 1} + \frac{n}{n + 1} F_{n , k} ~~~~~~~~~~ (\forall n , k \in \N^*, \text{ with } n \geq k) . 
\end{equation}
This last relation allows us to easily generate, via some programming language, the numbers $F_{n , k}$. Then, to generate the positive integers $d_{n , k}$, we simply use their definition: \linebreak $d_{n , k} := \den(F_{n , k})$ and to generate the positive integers $c_{n , k}$, we use the Formula of Theorem \ref{t2}: $c_{n , k} = \lcm\{d_{m , k} ;~ k \leq m \leq n\}$. Because $c_{n , k} = 0$ for $n < k$, it is practical to arrange the $c_{n , k}$'s (for $0 \leq k \leq n$) in a triangular array in which each $c_{n , k}$ is the entry in the $n$\textsuperscript{th} row and $k$\textsuperscript{th} column. The calculations (using Maple software) give the following triangle of the $c_{n , k}$'s ($n \geq k$) up to $n = 10$:\pagebreak

\begin{table}[t]
\begin{tabular}{p{12mm}p{12mm}p{12mm}p{12mm}p{12mm}p{12mm}p{12mm}p{12mm}p{12mm}p{12mm}p{12mm}}
$1$ & ~ & ~ & ~ & ~ & ~ & ~ & ~ & ~ & ~ & ~ \\
$1$ & $1$ & ~ & ~ & ~ & ~ & ~ & ~ & ~ & ~ & ~ \\
$1$ & $2$ & $1$ & ~ & ~ & ~ & ~ & ~ & ~ & ~ & ~ \\
$1$ & $6$ & $1$ & $1$ & ~ & ~ & ~ & ~ & ~ & ~ & ~ \\
$1$ & $12$ & $12$ & $2$ & $1$ & ~ & ~ & ~ & ~ & ~ & ~ \\
$1$ & $60$ & $12$ & $4$ & $1$ & $1$ & ~ & ~ & ~ & ~ & ~ \\
$1$ & $60$ & $180$ & $8$ & $6$ & $2$ & $1$ & ~ & ~ & ~ & ~ \\
$1$ & $420$ & $180$ & $120$ & $6$ & $6$ & $1$ & $1$ & ~ & ~ & ~ \\
$1$ & $840$ & $5040$ & $240$ & $240$ & $6$ & $4$ & $2$ & $1$ & ~ & ~ \\
$1$ & $2520$ & $5040$ & $15120$ & $240$ & $144$ & $4$ & $12$ & $1$ & $1$ & ~ \\
$1$ & $2520$ & $25200$ & $30240$ & $15120$ & $288$ & $240$ & $24$ & $3$ & $2$ & $1$
\end{tabular} \\
\caption{The triangle of the $c_{n , k}$'s for $0 \leq k \leq n \leq 10$}
\end{table}

\begin{rmq}\label{r1}
Another way to generate the $F_{n , k}$'s consists to use the following recurrent relation:
$$
F_{n , k} ~=~ \frac{k}{n} \sum_{m = k - 1}^{n - 1} F_{m , k - 1} ~~~~~~~~~~ (\forall n , k \in \N^*, \text{ with } n \geq k) ,
$$
which is easily derived by induction from \eqref{eq17}.
\end{rmq}

\noindent Next, to generate the positive integers $q_{n , k}$ ($n , k \in \N$, $n \geq k$), we can use the recurrent relation given by the following proposition:
\begin{prop}\label{p2}
For every $n , k \in \N^*$, with $n \geq k$, we have
$$
q_{n , k} ~=~ \lcm\big\{(n - m + 1) q_{m - 1 , k - 1} ~;~ k \leq m \leq n\big\} .
$$
\end{prop}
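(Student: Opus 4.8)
The plan is to prove the recurrence directly from the definition of $q_{n , k}$ as a least common multiple over $k$-tuples of positive integers, by grouping the tuples according to the value of one distinguished coordinate. The only external ingredient I would use is the elementary scaling property of the lcm: for a positive integer $c$ and any finite set $S \subset \N^*$, one has $\lcm\{c \, a ~;~ a \in S\} = c \cdot \lcm(S)$; this follows from $\lcm(c a , c b) = c \, \lcm(a , b)$ by induction on $\card{S}$. I would also freely use the associativity/idempotence of the lcm, namely that the lcm over a set equals the lcm, over any partition of that set into blocks, of the lcms within each block.

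First I would fix $n , k \in \N^*$ with $n \geq k$ and partition the index set $\{(i_1 , \dots , i_k) \in \N^{*k} ~;~ i_1 + \dots + i_k \leq n\}$ according to the value $j := i_k$ of the last coordinate (any single coordinate works, by symmetry). For the truncated tuple $(i_1 , \dots , i_{k - 1})$ to exist in $\N^{*(k - 1)}$ with $i_1 + \dots + i_{k - 1} \leq n - j$, I need $n - j \geq k - 1$, so $j$ ranges exactly over $\{1 , 2 , \dots , n - k + 1\}$ and no block is empty. Within the block $i_k = j$, every product factors as $i_1 i_2 \cdots i_k = j \cdot (i_1 i_2 \cdots i_{k - 1})$, so the scaling property gives that the lcm over this block equals $j \cdot \lcm\{i_1 \cdots i_{k - 1} ~;~ i_1 + \dots + i_{k - 1} \leq n - j\} = j \cdot q_{n - j , k - 1}$, by the very definition of $q_{n - j , k - 1}$. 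Combining the blocks yields the intermediate form
$$
q_{n , k} ~=~ \lcm\big\{j \cdot q_{n - j , k - 1} ~;~ 1 \leq j \leq n - k + 1\big\} .
$$

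Finally I would reindex by $m := n - j + 1$, so that $j = n - m + 1$ and $n - j = m - 1$; as $j$ runs from $1$ to $n - k + 1$, $m$ runs from $n$ down to $k$, and the intermediate form turns into exactly the claimed expression $\lcm\{(n - m + 1) q_{m - 1 , k - 1} ~;~ k \leq m \leq n\}$.

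The argument is essentially bookkeeping, so the main points requiring care are the scaling identity $\lcm(c S) = c \, \lcm(S)$ and the alignment of the index ranges: I would check that the nonemptiness bound $j \leq n - k + 1$ precisely matches the stated range $k \leq m \leq n$ after reindexing, so that no degenerate block is introduced or omitted. I would also note that the convention $q_{m - 1 , 0} = 1$ handles the base case $k = 1$ consistently, since the formula then collapses to $\lcm\{(n - m + 1) ~;~ 1 \leq m \leq n\} = \lcm(1 , 2 , \dots , n) = q_{n , 1}$, in agreement with the definition.
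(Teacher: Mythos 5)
Your proof is correct and takes essentially the same route as the paper's: both partition the index tuples according to the value of the last coordinate (which is forced into the range $1 \leq j \leq n-k+1$), pull that factor out of the lcm via the scaling property $\lcm\{c\,a ~;~ a \in S\} = c\,\lcm(S)$, identify the inner lcm as $q_{n-j,\,k-1}$, and reindex by $m = n - j + 1$. The additional care you give to justifying the scaling identity and checking the $k=1$ convention is sound but does not change the argument.
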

\begin{proof}
Let $n , k \in \N^*$ be fixed such that $n \geq k$. We have by definition:
$$
q_{n , k} ~:=~ \bigvee_{\begin{subarray}{c}
i_1 , \dots , i_k \in \N^* \\
i_1 + \dots + i_k \leq n
\end{subarray}} (i_1 \cdots i_k) ~=~ \bigvee_{\begin{subarray}{c}
i_1 , \dots , i_{k - 1} , i \in \N^* \\
i_1 + \dots + i_{k-1} + i \leq n
\end{subarray}} (i_1 \cdots i_{k - 1} i) . 
$$
Since for any $i_1 , \dots , i_{k - 1} , i \in \N^*$, the inequality $i_1 + \dots + i_{k - 1} + i \leq n$ implies \linebreak $i \leq n - (i_1 + \dots + i_{k - 1}) \leq n - k + 1$, then we derive that:
\pagebreak
\begin{eqnarray*}
q_{n , k} & = & \bigvee_{i = 1}^{n - k + 1} \left(\bigvee_{\begin{subarray}{c}
i_1 , \dots , i_{k - 1} \in \N^* \\
i_1 + \dots + i_{k - 1} \leq n - i
\end{subarray}} (i_1 \cdots i_{k - 1} i)\right) \\
& = & \bigvee_{i = 1}^{n - k +  1} \left(i \left(\bigvee_{\begin{subarray}{c}
i_1 , \dots , i_{k - 1} \in \N^* \\
i_1 + \dots + i_{k - 1} \leq n - i
\end{subarray}} i_1 \cdots i_{k - 1}\right)\right) \\
& = & \bigvee_{i = 1}^{n - k + 1} i q_{n - i , k - 1} \\
& = & \bigvee_{m = k}^{n} (n - m + 1) q_{m - 1 , k - 1} ~~~~~~~~~~ \text{(by setting $m = n - i + 1$)} ,
\end{eqnarray*}
as required. The proposition is proved.
\end{proof}

\noindent Similarly as for the $c_{n , k}$'s, it is also practical to arrange the $q_{n , k}$'s ($0 \leq k \leq n$) in a triangular array in which each $q_{n , k}$ is the entry in the $n$\textsuperscript{th} row and $k$\textsuperscript{th} column. Leaning on Formula of Proposition \ref{p2} and using Maple software, we get the following triangle of the $q_{n , k}$'s up to $n = 10$:

\begin{table}[h!]
\begin{tabular}{p{12mm}p{12mm}p{12mm}p{12mm}p{12mm}p{12mm}p{12mm}p{12mm}p{12mm}p{12mm}p{12mm}}
$1$ & ~ & ~ & ~ & ~ & ~ & ~ & ~ & ~ & ~ & ~ \\
$1$ & $1$ & ~ & ~ & ~ & ~ & ~ & ~ & ~ & ~ & ~ \\
$1$ & $2$ & $1$ & ~ & ~ & ~ & ~ & ~ & ~ & ~ & ~ \\
$1$ & $6$ & $2$ & $1$ & ~ & ~ & ~ & ~ & ~ & ~ & ~ \\
$1$ & $12$ & $12$ & $2$ & $1$ & ~ & ~ & ~ & ~ & ~ & ~ \\
$1$ & $60$ & $12$ & $12$ & $2$ & $1$ & ~ & ~ & ~ & ~ & ~ \\
$1$ & $60$ & $360$ & $24$ & $12$ & $2$ & $1$ & ~ & ~ & ~ & ~ \\
$1$ & $420$ & $360$ & $360$ & $24$ & $12$ & $2$ & $1$ & ~ & ~ & ~ \\
$1$ & $840$ & $5040$ & $720$ & $720$ & $24$ & $12$ & $2$ & $1$ & ~ & ~ \\
$1$ & $2520$ & $5040$ & $15120$ & $720$ & $720$ & $24$ & $12$ & $2$ & $1$ & ~ \\
$1$ & $2520$ & $25200$ & $30240$ & $30240$ & $1440$ & $720$ & $24$ & $12$ & $2$ & $1$
\end{tabular} \\
\caption{The triangle of the $q_{n , k}$'s for $0 \leq k \leq n \leq 10$}
\end{table}

To generate finally the positive integers $\lambda_n$ ($n \in \N$), we have the choice to use one of the three formulas: $\lambda_n = \lcm\{c_{n , k} ;~ 0 \leq k \leq n\}$ (according to \eqref{eq11}); $\lambda_n = q_n$ \linebreak $:= \lcm\{q_{n , k} ;~ 0 \leq k \leq n\}$ (according to Theorem \ref{t4}) or $\lambda_n = \prod_{p \text{ prime}} p^{\lfloor\frac{n}{p}\rfloor}$ (according to Theorem \ref{t4}). The calculations give the following first terms of the sequence ${(\lambda_n)}_{n \in \N}$:
\begin{multline*}
1 ~,~ 1 ~,~ 2 ~,~ 6 ~,~ 12 ~,~ 60 ~,~ 360 ~,~ 2520 ~,~ 5040 ~,~ 15120 ~,~ 151200 ~,~ \dots
\end{multline*}

\end{document}